% This is a LaTeX2e file
%
% Name: gbpga.tex
% Author: David J Green
% Date: 17 May 1999
%
\documentclass[a4paper,12pt]{article}
\usepackage{a4}
\usepackage{amsfonts}
\usepackage{amsmath}
\usepackage{theorem}
\usepackage[german,english]{babel}
\usepackage{hyperref}

\newcommand{\heute}{17 May 1999}

\newcommand{\Gro}[1]{Gr\"ob\-ner}
\newcommand{\HS}{\text{\sl HS}}
\newcommand{\Co}{\text{\sl Co}}

\newcommand{\f}[1][p]{\mathbb{F}_{#1}}
\newcommand{\fG}[1][p]{\f[#1]G}

\newcommand{\pres}[1]{\textsf{Present}}
\newcommand{\pathn}[1]{\texttt{pathnode}}
\newcommand{\Syl}[1]{\operatorname{Syl}_{#1}}
\newcommand{\McL}{\text{\sl McL}}
\newcommand{\Suz}{\text{\sl Suz}}
\newcommand{\Ru}{\text{\sl Ru}}
\newcommand{\LL}{\operatorname{LL}}
\newcommand{\RLL}{\operatorname{RLL}}
\newcommand{\eLL}{\operatorname{eLL}}
\newcommand{\eRLL}{\operatorname{eRLL}}
\newcommand{\Je}{\operatorname{Je}}
\newcommand{\Jen}{\operatorname{J}}
\newcommand{\lex}{\operatorname{lex}}
\newcommand{\MB}[1]{\textsf{MakeBasis}}
\newcommand{\mB}[1]{\texttt{makeBasis}}
\newcommand{\mJB}[1]{\texttt{makeJenningsBasis}}
\newcommand{\rPA}[1]{\texttt{regularPermutationAction}}
\newcommand{\gi}[1]{\texttt{groupInfo}}
\newcommand{\mam}[1]{\texttt{makeActionMatrices}}
\newcommand{\mnt}[1]{\texttt{makeNontips}}
\newcommand{\dms}[1]{\texttt{writeGroebnerBasis}}
\newcommand{\Mag}[1]{\textsf{Magma}}
\newcommand{\MA}[1]{\textsf{Meat\-Axe}}

% \title{Presenting $p$-group algebras: \Gro. bases and the choice of word
% ordering}
\title{\Gro. bases for $p$-group algebras}
\author{David J. Green\footnote{Current address: Mathematical Institute,
Friedrich-Schiller-Universit\"at Jena, D-07737 Jena, Germany. Email:
\texttt{david.green@uni-jena.de}} \\ Department of Mathematics \\
University of Wuppertal \\ D--42097 Wuppertal, Germany. \\
\texttt{green@math.uni-wuppertal.de}}
\date{\heute}

\newtheorem{theorem}{Theorem}[section]%                       Theorem 1.3
\newtheorem{proposition}[theorem]{Proposition}%             Proposition
\newtheorem{lemma}[theorem]{Lemma}%                  Lemma
%             Corollary
\theorembodyfont{\rmfamily}
\newtheorem{definition}[theorem]{Definition}
\newtheorem{example}[theorem]{Example}
\newtheorem{remark}[theorem]{Remark}

\newenvironment{proof}[1][]{\begin{trivlist} \item[\hskip\labelsep
\indent \emph{Proof#1.}]}{\foorp \end{trivlist}}%    Proof
\newcommand{\foorp}{{\unskip\nobreak\hfil\penalty50
 \hskip1em\vadjust{}\nobreak\hfil \vrule height5pt width5pt depth0pt
 \parfillskip=0pt \finalhyphendemerits=0 \par}}

\begin{document}
\maketitle

\begin{abstract}
\noindent
Experiment shows that the reverse length-lexicographical word ordering
consistently yields far smaller \Gro. bases for modular $p$-group algebras
than the length-lexicographical ordering.
For the so-called Jennings word ordering, based on a special power-conjugate
group presentation, the associated monomial algebra is a group invariant.
The package \pres. finds \Gro. bases for these three orderings.
\end{abstract}

\subsection*{Note added 9 October 2009}
These notes document some experiments I did in the late 1990s to compare the
usefulness of local and well orderings for noncommutative \Gro. basis
calculations in modular group algebras of $p$-groups. They are not (or rather
no longer) intended for publication. They are provided in the hope that
the tables in section~\ref{section:results} may be of interest.

Part of the material here formed the basis for Chapter~1 of my book\footnote{%
D.~J. Green, \emph{\Gro. Bases and the Computation of Group Cohomology},
Lecture Notes in Math.\@ vol.~1828, Springer-Verlag, 2003, xii+138pp.}\@.
The system \pres. is no longer being maintained, but parts of it live
on in the $p$-group cohomology package\footnote{S.~A. King and D.~J. Green,
\emph{$p$-Group Cohomlogy Package} for the \textsf{Sage} computer algebra
system. \texttt{http://sage.math.washington.edu/home/SimonKing/Cohomology/}}
for \textsf{Sage}\@.

\section{Introduction}
Let~$G$ be a finite $p$-group.  One way to do computational homological
algebra over the modular group algebra~$\fG$ is to realise it as a finitely
presented associative algebra, together with a \Gro. basis for the relations
ideal.  There are then several methods available for computing minimal
projective resolutions~\cite{FGKK}, \cite{GSZ},~\cite{method}\@.

The algebra generators are required to lie in the Jacobson radical.
Usually one chooses generators~$g_i$ for the group, and then takes
the $a_i = g_i - 1$ as generators for~$\fG$.
See \cite{FFG}~or \cite{vorl} for an introduction to noncommutative
\Gro. bases.

The choice of word ordering is of prime importance in the design of
efficient \Gro. basis methods. This paper compares the usefulness in our
context of three word orderings, and announces a package \pres. for computing
\Gro. bases with respect to these orderings.  They are:
\begin{itemize}
\item The \textbf{length-lexicographical} ordering ($\LL$):
the standard ordering for homological algebra
over associative algebras.
\item The \textbf{reverse length-lexicographical} ordering ($\RLL$)
allows one to read off the radical layers.
Empirically, it always yields a small \Gro. basis.
\item The \textbf{Jennings} ordering (see Definition~\ref{defn:JenningsOrder})
is defined on special presentations, which are constructed using the
Jennings series and involve some redundant generators.
Broadly speaking, it is for $p$-group algebras what power-conjugate
presentations are for $p$-groups.
All \Gro. bases have the same size, and in fact the associated monomial algebra
is an invariant of the group's order.
\end{itemize}
The criteria for comparing orderings are:
\begin{itemize}
\item
Smallest size of \Gro. basis, taken over all choices of generators.
\item
How easy is it to realise a small \Gro. basis?  That is, how
sensitive is size of \Gro. basis to choice of generators?
\item
How many properties of the \Gro. basis (e.g.,~maximum length of a
reduced word) do not depend on choice of generators?
\end{itemize}

\noindent
In Sections \ref{section:LL}~and \ref{section:Jennings} we look at
the orderings one by one.  For each ordering, a method is presented
for computing the \Gro. basis for the relations ideal.
These methods were implemented in the package \pres.\@.
Data structures and other aspects of the implementation are discussed in
Section~\ref{section:implement}.  Results obtained using \pres.
are presented in Section~\ref{section:results}, where we draw conclusions
about the comparative usefulness of the orderings.

The package \pres. is written partly in \Mag. code and partly in C\@.
See the end of the paper for details of how to obtain it.

\paragraph{Acknowledgements}
This work was started while I was at the Institute for Experimental Mathematics
in Essen, Germany. I am very grateful to the Institute for the continuing use
of their computing facilities.

\section{\Gro. bases}
\label{section:LL}
First we recall the basics about \Gro. bases in free associative algebras.
Let~$M$ be the free associative monoid with~$1$ on a finite set~$A$.
That is, the elements of~$M$ are words in the alphabet~$A$.  Assume we have
an ordering on~$M$ which is admissible in the following sense (far weaker
than that
of~\cite{FGKK}):

\begin{definition}
\label{defn:admissible}
Let~$M$ be a free monoid.  An ordering on~$M$ is called admissible
if $u v_1 w \leq u v_2 w$ whenever $u,v_1,v_2,w$ are elements
of~$M$ with $v_1 \leq v_2$.
\end{definition}

\noindent
Now let~$A$ be a set of algebra generators for~$\fG$, all lying in the
Jacobson radical~$J(\fG)$.
Then $M$~is the set of monomials in the free associative
$\f$-algebra on~$A$.  Write $I$ for the relations ideal: the kernel
of projection from this free algebra to~$\fG$.
% A word $w \in M$ is called a tip if, as an element of~$\fG$, it
% is a linear combination of its predecessors; if not, it is called a nontip.
% The nontips are linearly independent in~$\fG$.
A word $w \in M$ is called a tip if it is tha largest word in the support
of some element of~$I$; if not, it is called a nontip.
That is, the nontips are the words whose images in~$\fG$ are linearly
independent of the images of their predecessors.  It follows immediately
that the images of the nontips are linearly independent in~$\fG$.

\begin{lemma}
\label{lemma:span}
With the above assumptions, the nontips form a basis for~$\fG$.
\end{lemma}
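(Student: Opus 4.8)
The plan is to prove that the images of the nontips span $\fG$; combined with their linear independence, observed just before the statement, this shows that they form a basis. Write $V \subseteq \fG$ for the $\f$-span of the images of the nontips, and for a word $w \in M$ write $\bar w$ for its image in $\fG$. Call a word $w$ \emph{bad} if $\bar w \notin V$. Since the monomials $M$ span the free algebra and the projection onto $\fG$ is surjective, the images $\bar w$ span $\fG$; hence it suffices to show that there are no bad words.

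First I would note that there are only finitely many bad words. As $\fG$ is finite-dimensional, $J(\fG)$ is nilpotent, say $J(\fG)^N = 0$. Every generator $a \in A$ lies in $J(\fG)$, so a word of length $\ell$ maps into $J(\fG)^{\ell}$; in particular every word of length $\geq N$ maps to $0 \in V$ and is not bad. Since $A$ is finite there are only finitely many words of length $< N$, hence only finitely many bad words.

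Now suppose, for a contradiction, that a bad word exists, and let $w_0$ be the smallest bad word with respect to the ordering (possible, since the set of bad words is finite and nonempty). A nontip has image in $V$ trivially, so $w_0$ is a tip: there is $f \in I$ with $w_0 = \max \supp(f)$. Write $f = c\, w_0 + \sum_{u < w_0} c_u\, u$ with $c \neq 0$ and $u$ running over $\supp(f) \setminus \{w_0\}$. Applying the projection to $\fG$, which kills $f$, gives $\bar{w_0} = -c^{-1} \sum_{u < w_0} c_u\, \bar u$. Each word $u$ occurring here is strictly smaller than $w_0$, hence not bad by minimality, so $\bar u \in V$; therefore $\bar{w_0} \in V$, contradicting the badness of $w_0$. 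Hence no bad word exists, and the nontips span $\fG$.

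The main obstacle is that the ordering is assumed only admissible, not a well-ordering, so one cannot simply induct over words; the point that rescues the standard Gr\"obner-style reduction is precisely the finiteness of the set of bad words, which in turn rests on the nilpotence of $J(\fG)$ and on the generators lying in $J(\fG)$. This is the only place where the hypotheses ``$G$ a finite $p$-group'' and ``$A \subseteq J(\fG)$'' are used, and it may be worth flagging this in the proof.
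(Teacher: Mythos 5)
Your proof is correct and follows essentially the same route as the paper: the paper's (much terser) proof also uses nilpotency of $J(\fG)$ together with $A \subseteq J(\fG)$ to reduce to finitely many relevant words, and then extracts the nontips as a spanning subset. Your minimal-bad-word argument simply makes explicit the descent that the paper leaves implicit, and your closing remark about why finiteness is needed in place of a well-ordering is exactly the right point to flag.
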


\begin{proof}
By Nakayama's Lemma, $J^N(\fG)$ is zero for some~$N$.  All
algebra generators lie in~$J$, and so only finitely many
words are nonzero in~$\fG$.  These span~$\fG$, and contain the
nontips as a spanning subset.
\end{proof}

% \noindent
% By admissibility, all proper subwords of nontips are again nontips.
% A tip is called minimal if all its proper subwords are nontips.
% Hence the minimal tips are the unique minimal generators for
% the multiplicative subset of tips; and the set
% \[ \{ w - \nu(w) \mid \text{$w$ a minimal tip} \} \]
% generates~$I$, where $\nu(w)$ is the linear combination of nontips which
% in~$\fG$ equals~$w$.  This set is a \Gro. basis for~$I$: the completely
% reduced \Gro. basis for~$I$ with respect to the chosen ordering.

\noindent
By admissibility, the tips form an ideal in the free monoid~$M$.
This ideal has a unique smallest generating set.  It consists of
the \emph{minimal tips}, a tip being called minimal if and only if all proper
subwords are nontips.
Define
\[ \mathcal{G} := \{ w - \nu(w) \mid \text{$w$ a minimal tip} \} \, , \]
where $\nu(w)$ is the linear combination of nontips which in~$\fG$ equals~$w$.
Then $\mathcal{G}$~and the length~$N$ words together generate~$I$.
So, in the presence of the constraint $A^N \subseteq I$, the
set~$\mathcal{G}$ is a \Gro. basis for~$I$: the unique completely reduced
\Gro. basis for this ordering.

\begin{remark}
For the length-lexicographical and Jennings orderings, the constraint
$A^N \subseteq I$ is not necessary.
\end{remark}

\subsection{The length-lexicographical ordering}
Denote by $\ell(w)$ the length of a word $w \in M$.  Putting an ordering
on the set~$A$ of algebra generators induces a lexicographical ordering
$\leq_{\lex}$~on $M$.  The length-lexicographical ordering $\leq_{\LL}$~on
$M$ is then defined as follows:
\[
\begin{tabular}{rl}
$w_1 \leq_{\LL} w_2$ if & $\ell(w_1) < \ell(w_2)$, \\
or & same length, and $w_1 \leq_{\lex} w_2$.
\end{tabular}
\]
This ordering is admissible.  Moreover, each word $w \in M$ has only
finitely many predecessors under~$\leq_{\LL}$.
We do not use the Buchberger algorithm to compute the \Gro. basis, because
for $p$-group algebras it takes an unreasonable amount of time to stop.
Rather, we exploit the fact that $\fG$ is finite-dimensional, which means
that it is feasible to list all the nontips.

\paragraph{Computing a \Gro. basis}
Construct~$G$ in \Mag. using a faithful permutation
representation.  Choose minimal generators $g_1,\ldots,g_r$
% for~$G$ (see~\S\ref{subsection:gsm}),
for~$G$,
and set $a_i = g_i-1$.  Take the~$a_i$
as the set~$A$ of algebra generators.  Record the matrix for the right
multiplication action of each~$a_i$, with basis the group
elements.

Using these matrices, each word is constructed as a linear combination of
the group elements, starting with the smallest word and proceeding in
$\LL$-order.  Gaussian elimination allows us to distinguish the tips from
the nontips.  We stop when we
have found all the nontips: their number is known in advance.
The list of minimal tips is deduced from the list of nontips by word
manipulation.  Change of basis gives us the matrix for the multiplication
action of each generator with respect to the basis of nontips,
and we can read off~$\nu(w)$ for each minimal tip~$w$ from these matrices.

\subsection{The reverse length-lexicographical ordering}
Again defined on the free monoid~$M$ on an ordered set~$A$, this
admissible ordering is the opposite of~$\LL$.  Namely,
\[ w_1 \leq_{\RLL} w_2 \quad \text{if and only if} \quad w_1 \geq_{\LL} w_2
\, . \]
Note that~$\RLL$ is not a well-ordering.

\begin{lemma}
The $\RLL$-nontips of length at least~$r$ constitute a basis for~$J^r(\fG)$.
Hence the length~$r$ $\RLL$-nontips are a basis for a complement
of $J^{r+1}$~in $J^r$.
\end{lemma}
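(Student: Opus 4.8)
The plan is to show that the $\RLL$-nontips of length $\geq r$ span $J^r(\fG)$ and are linearly independent, the independence being automatic since nontips always have linearly independent images. The key observation is that the $\RLL$-order refines the length order \emph{downwards}: a word of length $\ell$ is $\RLL$-smaller than every word of strictly smaller length, hence the $\RLL$-predecessors of a word $w$ of length $\ell$ are precisely all words of length $> \ell$ together with the $\lex$-predecessors of $w$ among words of its own length. Thus a word $w$ of length $\ell$ is an $\RLL$-nontip exactly when the image of $w$ in $\fG$ is \emph{not} in the span of $\{\text{images of all words of length } > \ell\} \cup \{\text{images of words of length } \ell \text{ that are } \lex\text{-smaller}\}$. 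Since all algebra generators lie in $J$, the words of length $\geq r$ have images spanning $J^r(\fG)$; I want to pick out a basis of $J^r$ from among them, and this downward-refinement property is exactly what makes the nontips do that job.

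First I would fix $r$ and set $V_r := J^r(\fG)$, which is spanned by the images of all words of length $\geq r$. I would run Gaussian elimination on these images processed in $\RLL$-increasing order: because every word of length $> \ell$ precedes every word of length $\ell$, the procedure first exhausts all lengths greater than $r$ before touching length $r$, then length $r$ before length $r-1$ is irrelevant (we stop at $\geq r$). At each step a word's image is either dependent on the images of its $\RLL$-predecessors-of-length-$\geq r$ — making it a "tip relative to $V_r$" — or independent, in which case it is retained. The retained words are by construction a basis of $V_r = J^r(\fG)$. The remaining point is that this notion of being retained coincides with being an $\RLL$-nontip in the sense of the paper: a word $w$ of length $\geq r$ is retained iff its image is independent of the images of \emph{all} its $\RLL$-predecessors, and I must check that including the $\RLL$-predecessors of length $< r$ does not change anything. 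But any such predecessor has length $< r$ while $w$ has length $\geq r$; working modulo $J^{r}$ is the wrong move — instead I use that $J^r$ is spanned by words of length $\geq r$, so the span of the images of all $\RLL$-predecessors of $w$ intersected with the consideration for $w \in J^r$ is governed only by the length-$\geq r$ predecessors. More carefully: if $\mathrm{im}(w)$ lies in the span of images of \emph{all} $\RLL$-predecessors, then since $\mathrm{im}(w) \in J^r$ and $J^r$ has as a complement-summand the span of the length-$<r$ words' images modulo $J^r$... this needs a short argument that the length-$<r$ predecessors contribute nothing, which follows because $J^r$ is itself the span of length-$\geq r$ images and any expression of $\mathrm{im}(w)$ using shorter words can be reduced back into length-$\geq r$ words using admissibility and the relations.

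The main obstacle I anticipate is precisely this last compatibility check: ensuring that ``$\RLL$-nontip'' (defined against \emph{all} predecessors, of every length) agrees, for words of length $\geq r$, with ``independent against length-$\geq r$ predecessors only''. The clean way around it is to argue by descending induction on length, or equivalently to note that the $\RLL$-nontip property is monotone in the right direction: if $w$ (of length $\ell \geq r$) had image in the span of its $\RLL$-predecessors, the shorter ones among them already have images in $J^{r}$ would be false — rather, I should observe that being an $\RLL$-nontip of length $\ell$ depends only on words of length $\geq \ell$ (since those are exactly the $\RLL$-predecessors of length $\geq \ell$, and longer words can only make dependence \emph{easier}, never harder, so the cut-off at $\ell$ versus at $r \leq \ell$ is immaterial for words of length exactly $\ell$... no: longer words are \emph{more} of them). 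The correct formulation: $w$ of length $\ell$ is an $\RLL$-nontip iff $\mathrm{im}(w) \notin \mathrm{span}\{\mathrm{im}(u) : u >_{\RLL} w\} = \mathrm{span}\{\mathrm{im}(u) : \ell(u) > \ell\} + \mathrm{span}\{\mathrm{im}(u): \ell(u) = \ell,\ u <_{\lex} w\}$, and the first summand is $J^{\ell+1}$ by Lemma~\ref{lemma:span}-style reasoning applied to $J$. So the $\RLL$-nontips of length $\ell$ map to a basis of $J^{\ell}/J^{\ell+1}$, and summing over $\ell \geq r$ gives a basis of $J^r$; the second sentence of the lemma is then the case of a single $\ell$. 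Once the ``$>_{\RLL}$ = higher length, then lex'' decomposition is in hand, the rest is bookkeeping with Gaussian elimination and Lemma~\ref{lemma:span}.
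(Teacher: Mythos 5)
Your final formulation is essentially the paper's own proof: the paper likewise notes that the words of length at least~$r$ span $J^r(\fG)$, and that a linear combination of length-$r$ words lying in $J^{r+1}$ exhibits its $\RLL$-largest length-$r$ word as a tip, which is exactly your decomposition of the $\RLL$-predecessors of a length-$\ell$ word into ``all longer words (spanning $J^{\ell+1}$) plus same-length words on one lexicographic side,'' giving a basis of $J^\ell/J^{\ell+1}$ for each $\ell$ and hence of $J^r$ by summing. Only watch the directions: among words of equal length the $\RLL$-predecessors of $w$ are the $\lex$-\emph{larger} words, and predecessors are $u <_{\RLL} w$ rather than $u >_{\RLL} w$; neither slip affects the substance.
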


\begin{proof}
The words of length at least~$r$ span $J^r$.
If a linear combination of length~$r$ words lies in~$J^{r+1}$,
then the largest length~$r$ word involved is a tip.
\end{proof}

\noindent
\Mag. can compute the smallest~$N$ such that $J^N$ is zero (using the
Jennings series, the dimensions of all radical layers can be computed).
So we could obtain the nontips by running through the words of length
at most~$N$ in $\RLL$ order.  But this could take forever. For example,
if $G$~is a Sylow $2$-subgroup of the sporadic finite simple group~$\Co_3$,
then $N=23$ and $A$~has size~$4$.  Hence the number of words of
length~$N$ is~$2^{46}$.

\paragraph{Computing a \Gro. basis}
A length~$r$ word is a tip if and only if it lies in the space spanned by its
length~$r$ $\RLL$-predecessors and by $J^{r+1}(\fG)$.
Using Gaussian elimination and the matrices for the algebra generators,
we can calculate a basis for~$J^{r+1}$ from one for~$J^r$.  So
we start with $r=0$ and work through to $r=N$.  All length~$r$ nontips
are words of the form $w.a$ with $a \in A$ and $w$~a length $r-1$ nontip.
We work through these words in $\RLL$-order.
Once we have the nontips, we proceed as for~$\LL$.

\section{The Jennings ordering}
\label{section:Jennings}
For $r \geq 1$, define $F_r(G)$ to be the $r$th dimension subgroup
of the finite $p$-group~$G$.  That is,
\[ F_r(G) = \{ g \in G \mid g-1 \in J^r(\fG) \} \, . \]
Clearly $G = F_1(G) \geq F_2(G) \geq \cdots$.  The~$F_r$
do eventually reach the trivial group, and they form a central series, the
Jennings series for~$G$.  This series need not be strictly decreasing, however.
These and more facts about the Jennings series are demonstrated
in Section~3.14 of~\cite{Benson:I}

\begin{definition}
Let~$G$ be a $p$-group of order~$p^n$.  Elements $g_1,\ldots,g_n$~of $G$ are
Jennings pc-generators for~$G$ if the first~$m_1$ elements~$g_i$ are minimal
generators for~$F_1 (G) / F_2 (G)$, the next~$m_2$ are minimal generators for
$F_2 / F_3$, and so on.
\end{definition}

\noindent
Jennings pc-generators do yield a polycyclic presentation for~$G$,
as we now see.

\begin{lemma}
Let $g_1, \ldots, g_n$ be Jennings pc-generators for~$G$.
For each $1 \leq i \leq n$, we have
$g_i^p \in \langle g_{i+1}, \ldots, g_n \rangle$;
and $[g_i, g_j] \in \langle g_{j+1} \ldots, g_n \rangle$
for $1 \leq i < j \leq n$.
\end{lemma}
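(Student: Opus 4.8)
The plan is to exploit the defining property of the Jennings series, namely that $g - 1 \in J^r(\fG)$ precisely when $g \in F_r(G)$, together with the fact that the images of $g_{k}, g_{k+1}, \ldots, g_n$ generate the quotient group $F_s(G)$ modulo lower terms whenever $g_k$ is the first of the pc-generators lying in $F_s(G)$. First I would observe that both $g_i^p$ and $[g_i,g_j]$ lie in fairly deep terms of the Jennings series: since the series is central, $[F_r, G] \subseteq F_{r+1}$, so if $g_j \in F_s(G)$ then $[g_i,g_j] \in F_{s+1}(G)$ for any $i$; and since $F_r(G)$ is (essentially by Jennings' theorem, or by the $p$-restricted Lie structure) generated modulo $F_{pr}$ together with commutators by $p$th powers of $F_{\lceil r/p\rceil}$, one gets $g_i^p \in F_{r+1}(G)$ when $g_i \in F_r(G)$. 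In either case the element in question lies in $F_{t}(G)$ for some $t$ strictly larger than the Jennings-weight of $g_j$ (resp.\ of $g_i$).

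The second step is a descending induction argument. Suppose $x \in F_t(G)$ and I wish to show $x \in \langle g_{j+1}, \ldots, g_n\rangle$ where $g_{j+1}, \ldots, g_n$ are exactly the pc-generators of Jennings-weight $\geq$ something appropriate; concretely, for the commutator case I need $x \in \langle g_{j+1}, \ldots, g_n\rangle$ and I know $x \in F_{s+1}(G)$ while $g_{j+1}, \ldots, g_n$ comprise all pc-generators of weight $\geq s+1$ together with possibly some more of weight $s$ (those $g_k$ with $k > j$ but still in $F_s$). By the definition of Jennings pc-generators, the images of $\{g_k : g_k \in F_s(G)\}$ form a minimal generating set for $F_s(G)/F_{s+1}(G)$, and more generally $F_s(G) = \langle g_k, g_{k+1}, \ldots, g_n \rangle$ where $g_k$ is the first pc-generator in $F_s$. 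So I can write $x$, modulo $F_{s+1}(G)$, as a product of powers of the weight-$s$ generators $g_k, \ldots$; but $x \in F_{s+1}$ forces (by minimality of the generating set, i.e.\ independence in the elementary abelian quotient $F_s/F_{s+1}$) all those exponents to be divisible by $p$ — hence trivial in the quotient — leaving $x$ as a product in $\langle g_{j+1}, \ldots, g_n\rangle$ after absorbing the correction term, which lies in $F_{s+1}(G) \subseteq \langle g_{j+1}, \ldots, g_n\rangle$ by the same argument applied one level deeper. For $g_i^p$ one reasons identically, using that $g_i^p \in F_{r+1}(G)$ and that $g_{i+1}, \ldots, g_n$ contains all pc-generators of weight $\geq r+1$ as well as the remaining weight-$r$ generators $g_{i+1}, \ldots$.

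The main obstacle I expect is bookkeeping the indexing carefully: the pc-generators of a given Jennings-weight $s$ form a consecutive block, and one must check that "everything after $g_j$'' (resp.\ after $g_i$) really does contain a generating set for the relevant term $F_{s+1}(G)$ (resp.\ $F_{r+1}(G)$) of the series — this is where the fact that $F_\bullet$ is a \emph{descending} series indexed so that higher weight means deeper, combined with the block structure of the pc-generators, has to be used precisely. A secondary point needing care is the claim that $g_i^p \in F_{r+1}(G)$ when $g_i \in F_r(G)$: this is a genuine input about the Jennings series (it is the statement that $\xi \mapsto \xi^p$ raises Jennings-weight, part of the restricted Lie algebra structure on the associated graded), and I would simply cite Section~3.14 of~\cite{Benson:I} for it rather than reprove it. Granting these, the two assertions follow by the induction just sketched.
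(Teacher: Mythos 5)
Your proof is correct and rests on the same two facts the paper cites ($[F_r,F_s]\leq F_{r+s}$, or centrality, and that $p$th powers of elements of $F_r$ lie in $F_{pr}$, both from Section~3.14 of~\cite{Benson:I}); you simply make explicit the downward induction showing that the tail generators generate the deeper terms of the Jennings series, which the paper's one-line proof leaves implicit. Note that your detour through exponents divisible by $p$ is unnecessary: since $[g_i,g_j]$ and $g_i^p$ already lie in $F_{s+1}$ (resp.\ $F_{r+1}$), and the generators of weight $\geq s+1$ (all of index $>j$) generate $F_{s+1}$ by the downward induction, the conclusion is immediate.
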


\begin{proof}
It is known that $[F_r, F_s] \leq F_{r+s}$
and that $p$th powers of elements of~$F_r$ lie in~$F_{pr}$.
\end{proof}

\begin{example}
\label{ex:32G}
Let~$G = \langle a,b,c,\phi \rangle$
be the following semidirect product group of order~$32$: the subgroup
$\langle a,b,c \rangle$ is normal, and elementary abelian of order eight.
The order of~$\phi$ is four, and the effect of conjugation on the left
by~$\phi$ is
\[ a \longmapsto b \longmapsto c \longmapsto abc \, . \]
Then $F_2 = \langle ab, ac, \phi^2 \rangle$ and $F_3 = \langle ac \rangle$
are elementary abelian, and so
$a, \phi, ab, \phi^2, ac$ are Jennings pc-generators for~$G$.

The last three generators are Jennings pc-generators for~$F_2$.
Another family of Jennings pc-generators for~$F_2$ is
$ac, ab, \phi^2$.  This family cannot be extended to Jennings pc-generators
for~$G$,  since $[\phi, ab] = ac$.  Hence to find Jennings pc-generators
for~$G$, it is not enough to take minimal generators for~$G$ and add
on Jennings pc-generators for~$F_2$.
\end{example}

\begin{definition}
Let $g_1,\ldots,g_n$ be Jennings pc-generators for~$G$.  Then
$a_1, \ldots, a_n$ are Jennings generators for the group algebra~$\fG$,
where $a_i = g_i - 1$.  We say~$a_i$ has dimension~$r$ if
$a_i \in J^r(\fG) \setminus J^{r+1}(\fG)$, or equivalently
$g_i \in F_r \setminus F_{r+1}$.
\end{definition}

\noindent
Order the Jennings generators as follows: $a_1 < a_2 < \cdots < a_n$.
For a word~$w$ in these generators, define its dimension $\dim(w)$ in
the obvious additive way:
\[ \dim(a_{i_1} \ldots a_{i_r}) = \sum_{j=1}^r \dim(a_{i_j}) \, . \]

\begin{definition}
\label{defn:JenningsOrder}
The Jennings ordering is the following ordering on the monoid of
words in the Jennings generators:
\[ \begin{tabular}{rl}
$w_1 \leq_{\Jen} w_2$ if & $\dim(w_1) > \dim(w_2)$, \\
or & dimensions equal and $\ell(w_1) < \ell(w_2)$, \\
or & same dimension and length, and $w_1 \geq_{\lex} w_2$.
\end{tabular} \]
\end{definition}

\noindent Observe that the Jennings ordering is admissible; and that
$\dim(w) \geq \ell(w)$ for each word~$w$.
The next result tells us that it is straightforward to calculate
the \Gro. basis for the Jennings ordering.

\begin{proposition}
\label{prop:Jennings}
Let $a_1,\ldots, a_n$ be Jennings generators for~$\fG$.
The minimal tips are the words~$a_i^p$ and $a_j a_k$ with $j < k$.
The nontips are the words
$a_n^{e_n} a_{n-1}^{e_{n-1}} \ldots a_1^{e_1}$ with $0 \leq e_i \leq p-1$.
A nontip~$w$ lies in $J^r \setminus J^{r+1}$ if and only $\dim(w) = r$.
\end{proposition}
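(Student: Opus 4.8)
The plan is to work in three stages. First I would pin down the normal-form basis $N=\{a_n^{e_n}\cdots a_1^{e_1}\mid 0\le e_i\le p-1\}$ of $\fG$ and locate it in the radical filtration; then show that each $a_i^p$ and each $a_ja_k$ with $j<k$ is a tip; then read off the nontips and the minimal tips by a dimension count. Throughout write $\delta_i=\dim(a_i)$. \emph{Stage 1.} Since the $g_i$ give a power-commutator presentation of $G$, the collection process writes each element of $G$ in the normal form $g_n^{e_n}\cdots g_1^{e_1}$ with $0\le e_i\le p-1$; substituting $g_i=1+a_i$ and expanding shows this element lies in the $\f$-span of $N$, so $N$ spans $\fG$ and hence is a basis, as $|N|=p^n=\dim_\f\fG$. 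Each $v=a_n^{e_n}\cdots a_1^{e_1}$ is a product of generators lying in $J^{\delta_i}$, so $v\in J^{\sum e_i\delta_i}=J^{\dim v}$; thus $\{v\in N\mid\dim v\ge r\}$ is a linearly independent subset of $J^r(\fG)$. By Jennings' theorem (Section~3.14 of \cite{Benson:I}), $\dim_\f(J^j/J^{j+1})$ is the coefficient of $t^j$ in $\prod_m\left(\frac{1-t^{pm}}{1-t^m}\right)^{\dim(F_m/F_{m+1})}$, and by the definition of Jennings pc-generators this product equals $\prod_i\frac{1-t^{p\delta_i}}{1-t^{\delta_i}}=\prod_i\left(1+t^{\delta_i}+\cdots+t^{(p-1)\delta_i}\right)$, whose coefficient of $t^j$ is $\#\{v\in N\mid\dim v=j\}$. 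Summing over $j\ge r$ gives $\#\{v\in N\mid\dim v\ge r\}=\dim_\f J^r$, so the linearly independent subset above is in fact a basis of $J^r$. In particular, for $v\in N$ we get $v\in J^r\setminus J^{r+1}$ if and only if $\dim v=r$.

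\emph{Stage 2.} Recall that a word is a tip precisely when its image in $\fG$ is a linear combination of images of its $\leq_{\Jen}$-predecessors. In characteristic $p$, $a_i^p=g_i^p-1$ in $\fG$, and $g_i^p\in F_{p\delta_i}$ since $p$-th powers of elements of $F_{\delta_i}$ lie in $F_{p\delta_i}$. By Stage~1 the normal-form expansion of $a_i^p$ therefore involves only $v\in N$ with $\dim v\ge p\delta_i$; and, reducing modulo $J^{p\delta_i+1}$ — where $F_{p\delta_i}/F_{p\delta_i+1}$ is spanned by the images of the generators of dimension $p\delta_i$ — its part of dimension $p\delta_i$ is a linear combination of the length-one words $a_l$ with $\delta_l=p\delta_i$. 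Every term of dimension $>p\delta_i$ precedes $a_i^p$ in $\leq_{\Jen}$ (larger dimension), and each such $a_l$ precedes it too (same dimension, length $1<p$); so $a_i^p$ is a tip. For $a_ja_k$ with $j<k$, a commutator identity gives $g_jg_k=g_kg_j\,c$ with $c=[g_j,g_k]\in\langle g_{k+1},\dots,g_n\rangle$ and $c\in F_{\delta_j+\delta_k}$ (as $[F_r,F_s]\le F_{r+s}$); writing $b=c-1\in J^{\delta_j+\delta_k}$, this becomes in $\fG$
\[
a_ja_k=a_ka_j+b+a_kb+a_jb+a_ka_jb .
\]
Here $a_ka_j$ already lies in $N$; the terms $a_kb,a_jb,a_ka_jb$ lie in $J^{\delta_j+\delta_k+1}$; and, as above, the part of $b$ of dimension $\delta_j+\delta_k$ is a linear combination of length-one words $a_l$ with $\delta_l=\delta_j+\delta_k$. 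Since $a_ka_j$ has the same dimension and length as $a_ja_k$ but is lexicographically larger, hence smaller in $\leq_{\Jen}$, while the $a_l$ are shorter and the remaining terms have larger dimension, all these terms precede $a_ja_k$; so $a_ja_k$ is a tip.

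\emph{Stage 3.} The tips form an ideal of the free monoid (admissibility), so by Stage~2 every word divisible by some $a_i^p$ or some $a_ja_k$ ($j<k$) is a tip; these are exactly the words \emph{not} of the form $a_n^{e_n}\cdots a_1^{e_1}$ with all $e_i\le p-1$, i.e.\ the words outside $N$. Hence every nontip lies in $N$; since the nontips form a basis of $\fG$ (Lemma~\ref{lemma:span}) there are $\dim_\f\fG=p^n$ of them, and $|N|=p^n$, so the nontips are exactly $N$. Combined with Stage~1, this proves the statements about the nontips and about the radical layers. The tips then form the monoid ideal complementary to $N$, whose unique smallest generating set consists of the subword-minimal words outside $N$; a direct check — each $a_i^p$ and each $a_ja_k$ ($j<k$) lies outside $N$ while all its proper subwords lie in $N$, and every word outside $N$ contains one of them as a subword — shows this set is exactly the claimed family, so these are the minimal tips.

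The step I expect to be the main obstacle is Stage~2: controlling the leading (top-dimensional) part of the normal-form expansions of $a_i^p$ and $a_ja_k$ is precisely where the radical filtration $J^{\bullet}$ of $\fG$ must be matched against the group-theoretic Jennings filtration $F_{\bullet}$ of $G$ — via $g_i^p\in F_{p\delta_i}$, $[g_j,g_k]\in F_{\delta_j+\delta_k}$, and $F_m/F_{m+1}$ being spanned by the images of the dimension-$m$ generators. The remaining ingredients are either quotable (Jennings' theorem), routine collection-process bookkeeping, or elementary combinatorics of the word order.
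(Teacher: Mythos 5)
Your proof is correct and takes essentially the same route as the paper's: you show that $a_i^p$ and $a_ja_k$ ($j<k$) are tips using $g_i^p\in F_{p\dim(a_i)}$, $[F_r,F_s]\le F_{r+s}$ and the identity $a_ja_k=a_ka_j+\gamma+\gamma a_k+\gamma a_j+\gamma a_ka_j$, and then conclude by counting the words of normal form against the $p^n$ nontips, with Jennings' theorem giving the radical layers. Your Stages 1 and 3 simply spell out the counting, the leading-term analysis, and the subword check for minimality that the paper compresses into a sentence or two.
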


\begin{proof}
We just have to show that $a_i^p$ and $a_j a_k$ ($j<k$) are tips.
For then all nontips are of the form claimed, and the number of
nontips equals the number of claimed nontips.  So the nontips and
hence the minimal tips are as claimed.  The distribution in radical
layers then follows from Jennings' theorem
(Theorem 3.14.6 in~\cite{Benson:I})\@.

If $\dim(a_i)=r$ then $a_i^p$ lies in $\f F_{pr}$,
which is generated by the $a_j$ with $\dim(a_j) \geq pr$.
Hence~$a_i^p$ is a linear combination of words in these~$a_j$.
Each such word either has dimension greater than~$pr$ or has length one.
So $a_i^p$~is a tip, since it has length~$p$.

Similarly, if $\dim(a_j)=s$ and $\dim(a_k)=t$, then $c = [g_j, g_k]$
lies in~$F_{s+t}$.  Set $\gamma = c-1$, so that $\gamma \in \f F_{s+t}$.
Arguing as above, $\gamma$~is a linear combination of words smaller than
$a_j a_k$~or $a_k a_j$.  Since $g_j g_k = c g_k g_j$, we have
\begin{equation*}
% \label{eqn:Jennings}
a_j a_k = a_k a_j + \gamma + \gamma a_k + \gamma a_j + \gamma a_k a_j \, ,
\end{equation*}
and the right hand side is a linear combination of words
smaller than~$a_j a_k$.
\end{proof}

\begin{example}
Let~$G$ be the cyclic group of order four.  Then Jennings pc-generators for~$G$
are $g_1,g_2$ with $g_1^2 = g_2$, $g_2^2 = 1$ and $[g_1,g_2] = 1$.
The corresponding Jennings generators for~$\fG$ are $a_1 = g_1-1$ and
$a_2=g_2-1$.  The minimal tips are $a_1^2$, $a_1 a_2$ and $a_2^2$; the
nontips are $a_2 a_1, a_2, a_1, 1$ in ascending order; and the \Gro. basis is
\[
a_1^2 + a_2 \, , \qquad a_1 a_2 + a_2 a_1 \, , \qquad a_2^2 \, .
\]
\end{example}

\begin{remark}
Jennings pc-generators are usually not minimal group generators.  This means
that there are relations involving length one words.  However, the Jennings
ordering does guarantee that all tips have length at least two.
\end{remark}

\paragraph{Computing a \Gro. basis}
Get \Mag. to compute the Jennings series.  Use this to pick Jennings
pc-generators for~$G$.  The nontips and the minimal tips are known
by Proposition~\ref{prop:Jennings}.  Proceed as for~$\LL$.

\section{The implementation}
\label{section:implement}
In this section we provide an overview of the package \pres.,
and describe the data structures used.
Some components of \pres. are written in \Mag. code, others are written
in~C\@.  The C components use M.~Ringe's C \MA. to handle vectors and
matrices over finite fields.

Groups must be constructed in \Mag. as permutation groups.
The function \rPA. is provided to convert pc-groups (and hence matrix groups
too) into permutation groups using the regular permutation action.

\subsection{Selecting minimal generators}
\label{subsection:gsm}
To choose minimal generators for a $p$-group~$G$ in pc-presentation,
\pres. first asks \Mag. for the Frattini subgroup~$\Phi(G)$, and sets
$H = \Phi(G)$.  Then it selects an element $g \in G - H$, adds this to the
list of generators, and replaces $H$~by $\langle g, H\rangle$.  This
is repeated until $H$~is equal to~$G$.  At this point the elements~$g$
constitute a minimal generating set.

There remains the question of which element of~$G-H$ to select at each stage.
Two generator selection methods were investigated:
\begin{itemize}
\item
The most obvious method: pick $g \in G - H$ completely at random.
Minimal generators constructed in this way are called
\emph{arbitrary} minimal generators.
\item
Calculate the exponent of each element of $G-H$,
and pick~$g$ at random from amongst those with the smallest exponent.
Such minimal generators are called \emph{smallest exponent} minimal generators.
\end{itemize}
The merits of these two methods are compared in Section~\ref{section:results}\@.
By default, \pres. uses the smallest exponent method for~$\LL$, and
the arbitrary method for~$\RLL$\@.

\subsection{The nontips tree}
The group generators are passed to the C programs as a list of permutations.
For the Jennings ordering, generator dimensions are passed too.  The \Gro.
basis is now determined by the method for the chosen ordering given above.

The nontips are words, and all proper subwords of a nontip are again nontips.
Hence the most natural way to store the nontips is as a tree, in which
the children of a word~$w$ are the nontips of the form $w.a$ with $a \in A$,
the set of algebra generators.
On the other hand, the nontips constitute an ordered basis, and so we also want
to be able to address them as an array.

In \pres., the nontips are stored as a length~$|G|$ array of \pathn.s.
The class \pathn. contains
\begin{itemize}
\item the word~$w$ itself, its array index and its length
\item an array of pointers to its children, indexed by the elements of~$A$\@.
(If $w.a$ is a tip, then the corresponding pointer is null.)
\item a pointer to the parent word.  Also, which child of the parent it is.
\end{itemize}
The nontips are stored in the array in ascending order for~$\LL$, and in
descending order for $\RLL$ and the Jennings ordering.  This means that
the root of the tree is always located in the first entry of the array;
and that
the nontips tree can be built \emph{while} the nontips are being determined.

For the second point, observe that the $\LL$ nontips are determined in
ascending order.  Also, the number of length~$r$ $\RLL$ nontips is known
in advance, and they are determined in ascending order after all nontips
of smaller length have been determined.  Hence for $\LL$ and $\RLL$, the index
of each nontip is known the moment it is identified as a nontip.

Strictly, the Jennings nontips are an exception here.  They are written down
in lexicographical order, and then sorted into Jennings order.

\subsection{The components of \pres.}
The package \pres. consists of the package \MB. of \Mag. functions,
and four C programs.

The \Mag. function \mB. constructs nontips and action matrices for the
$\LL$ and $\RLL$ orderings.  A variant has a user-defined number of
attempts at finding the smallest \Gro. basis.
The function \mJB. constructs nontips and action matrices for the Jennings
ordering.  There is no need to have repeated attempts.

For each ordering, a flag allows the user to insist that the defining group
generators are used.  This eliminates the random component.

The C programs \mnt. and \mam. are invoked by the \Mag. functions to determine
the nontips and the action matrices respectively.  The program \dms.
can then be used to write out the \Gro. basis and the minimal tips.
The utility program \gi. prints out relevant statistics on the groups
by decoding the nontips file header.

\section{Results and conclusions}
\label{section:results}
The package \pres. was used to compare the orderings $\LL$~and $\RLL$.
The groups used in the comparison were: all $51$ groups of order~$32$;
all $267$ groups of order~$64$; and eight Sylow $p$-subgroups of
sporadic finite simple groups.

Define $\LL(G)$ to be the smallest size of a \Gro. basis for the relations
ideal with respect to the $\LL$ ordering.  Define $\RLL(G)$
similarly for~$\RLL$\@.  Using \pres. we can obtain empirical
approximations $\eLL(G)$~and $\eRLL(G)$ to these numbers.

By Propostion~\ref{prop:Jennings}, all \Gro. bases for the Jennings ordering
have the same size~$\Je(G)$.  This is $\frac12 n(n+1)$ for a group
of order~$p^n$.

\subsection{Groups of order 32}
There are 51 groups of order 32.  The smallest \Gro. basis found in twenty
attempts was recorded for each combination of: group, ordering ($\LL$~or $\RLL$)
and generator selection method (arbitrary or smallest).

For the~$\RLL$ ordering, the generator selection method made no difference.
For~$\LL$, smallest minimal generators yielded a smaller \Gro. basis in three
cases: one less element for two groups, and three less for one group.

The comparative performance of the two orderings is shown below.  The most
extreme difference was for the group with Hall--Senior number~$43$, where
$\eLL = 31$ and $\eRLL = 10$.
\[
\renewcommand{\arraystretch}{1.2}
\begin{array}{@{}l||c|c|c|c|c|c@{}}
d := \eLL - \eRLL & d < 0 & d = 0 & 1 \leq d \leq 3 & 4 \leq d \leq 6 &
7 \leq d \leq 9 & d \geq 10 \\
\hline
\text{No.\@ of groups} & 0 & 17 & 16 & 8 & 7 & 3
\end{array}
\]
% \begin{table}
% \[
% \begin{array}{c|r}
% \eLL - \eRLL & \text{\# groups} \\
% \hline
% {} < 0 & 0 \\
% {} = 0 & 17 \\
% {} \in \{1,2,3\} & 16 \\
% {} \in \{4,5,6\} & 8 \\
% {} \in \{7,8,9\} & 7 \\
% {} \geq 10 & 3 \\
% \hline
% \end{array}
% \qquad
% \begin{array}{c|r|r}
% \text{$G$ (Hall--Senior No.)} & \eLL & \eRLL \\
% \hline
% 40 & 19 & 6 \\
% 41 & 17 & 6 \\
% 43 & 31 & 10 \\
% \hline
% \end{array}
% \]
% \caption{Groups of order 32}
% \label{table:32}
% \end{table}

\subsection{Groups of order 64}
\label{subsection:64}
There are 267 groups of order 64.  The smallest \Gro. basis found in twenty
attempts was recorded for each combination of group, ordering
and generator selection method.

For the~$\RLL$ ordering, smallest minimal generators yielded a larger \Gro.
basis in $63$ cases, and a smaller \Gro. basis in one case.  Each time, the
difference was only one element.
For~$\LL$, smallest minimal generators yielded a smaller \Gro. basis in $97$
cases, with mean difference~$3.3$; and a larger \Gro. basis in $69$ cases,
with mean difference~$4.6$.

The comparative performance of the two orderings is shown below.  The most
extreme difference was for the group with Hall--Senior number~$187$, where
$\eLL = 57$ and $\eRLL = 10$.
\[
\renewcommand{\arraystretch}{1.2}
\begin{array}{@{}l||c|c|c|c|c|c@{}}
% \eLL - \eRLL &
% {} < 0 &
% {} = 0 &
% {} \in \{1,\ldots,8\} &
% {} \in \{9,\ldots,16\} &
% {} \in \{17,\ldots,32\} &
% {} \geq 33 \\
d := \eLL - \eRLL &
d < 0 &
d = 0 &
1 \leq d \leq 8 &
9 \leq d \leq 16 &
17 \leq d \leq 32 &
d \geq 33 \\
\hline
\text{No.\@ of groups} & 0 & 29 & 112 & 73 & 50 & 3
\end{array}
\]

% \begin{table}
% \[
% \begin{array}{c|r}
% \eLL - \eRLL & \text{\# groups} \\
% \hline
% {} < 0 & 0 \\
% {} = 0 & 31 \\
% {} \in \{1,\ldots,8\} & 100 \\
% {} \in \{9,\ldots,16\} & 70 \\
% {} \in \{17,\ldots,32\} & 63 \\
% {} \geq 33 & 3 \\
% \hline
% \end{array}
% \qquad
% \begin{array}{c|r|r}
% \text{$G$ (Hall--Senior No.)} & \eLL & \eRLL \\
% \hline
% 153 & 43 & 8 \\
% 186 & 46 & 10 \\
% 187 & 59 & 10 \\
% \hline
% \end{array}
% \]
% \caption{\textbf{Groups of order 64:}
% There are 267 groups of order 64.  For each of these groups and for both
% orderings, the \Gro. basis of was computed for twenty generating sets.}
% \label{table:64}
% \end{table}

\subsection{Sylow $p$-subgroups of sporadic finite simple groups}
One of the main applications driving the development of group cohomology
software is calculating the cohomology rings of sporadic finite
simple groups.  So Sylow $p$-subgroups of these groups are
important examples for \pres..  Moreover, they are a good source of larger
$p$-groups.

The sensitivity of \Gro. basis size to generator choice was investigated
by looking at four such Sylow $p$-subgroups.
The program was run thirty times for each combination of: group, ordering
($\LL$~or $\RLL$) and generator selection method.
The results are shown in Tables \ref{table:LLsmall}~and \ref{table:RLLsmall}\@.
We give the smallest and largest sizes of \Gro. basis found, together with
mean and standard deviation.

\begin{table}
\makebox[\linewidth][c]{$
\begin{array}{l|l||r|r|r|r||r|r|r|r}
\multicolumn2{c||}{} & \multicolumn4{c||}{\text{Arbitrary}} &
\multicolumn4{c}{\text{Smallest exponent}} \\
\hline
\text{Group} & \text{Order} & \text{Min} & \text{Max} &
\multicolumn1{c|}{\mu} & \multicolumn1{c||}{\sigma}
&
\text{Min} & \text{Max} &
\multicolumn1{c|}{\mu} & \multicolumn1{c}{\sigma}
\\
\hline
\Syl2(\HS) & 2^9 & 128 & 338 & 261 & 50 &
104 & 127 & 114 & 5 \\ 
\Syl2(M_{24}) & 2^{10} & 433 & 906 & 706 & 101 &
183 & 543 & 373 & 81 \\ 
\Syl2(\Co_3) & 2^{10} & 502 & 871 & 690 & 92 &
212 & 574 & 405 & 85 \\ 
\Syl3(\McL) & 3^6 & 126 & 333 & 258 & 50 &
133 & 245 & 197 & 43 \\ 
\hline
\end{array}
$}
\caption{Generator selection methods compared for~$\LL$\@.
Each sample size: 30.}
\label{table:LLsmall}
\end{table}

\begin{table}
\makebox[\linewidth][c]{$
\begin{array}{l|l||r|r|r|r||r|r|r|r}
\multicolumn2{c||}{} & \multicolumn4{c||}{\text{Arbitrary}} &
\multicolumn4{c}{\text{Smallest exponent}} \\
\hline
\text{Group} & \text{Order} & \text{Min} & \text{Max} &
\multicolumn1{c|}{\mu} & \multicolumn1{c||}{\sigma}
&
\text{Min} & \text{Max} &
\multicolumn1{c|}{\mu} & \multicolumn1{c}{\sigma}
\\
\hline
\Syl2(\HS) & 2^9 & 10 & 13 & 11.0 & 1.1 &
11 & 13 & 11.9 & 0.9 \\
\Syl2(M_{24}) & 2^{10} & 15 & 22 & 17.3 & 1.5 &
15 & 20 & 17.1 & 1.0 \\
\Syl2(\Co_3) & 2^{10} & 13 & 21 & 15.5 & 1.7 &
13 & 21 & 16.2 & 2.0 \\
\Syl3(\McL) & 3^6 & 11 & 14 & 11.8 & 1.0 &
12 & 14 & 12.6 & 0.9 \\
\hline
\end{array}
$}
\caption{Generator selection methods compared for~$\RLL$\@.
Each sample size: 30.}
\label{table:RLLsmall}
\end{table}

% \noindent
In Table~\ref{table:allspor}, we compare smallest \Gro. basis sizes for
all three orderings by looking at eight Sylow $p$-subgroups of
sporadic finite simple groups.  Each empirical value is based on at least
twenty calculations.

\begin{table}
\renewcommand{\arraystretch}{1.2}
\makebox[\linewidth][c]{$
\begin{array}{l||r|r|r|r|r|r||r|r}
& \multicolumn6{c||}{\text{Sylow $2$-subgroup of}} &
\multicolumn2{c@{}}{\text{$\Syl3$ of}} \\
& M_{22} & \HS & M_{24} & \Co_3 & \Suz & \Ru & \McL & \Suz \\
\hline \hline
\text{Order} & 2^7 & 2^9 & 2^{10} & 2^{10} & 2^{13} & 2^{14} & 3^6 & 3^7 \\
\hline
\eLL & 34 & 104 & 150 & 236 & 2669 & 3111 & 126 & 417 \\
\hline
\eRLL & 8 & 9 & 15 & 13 & 21 & 18 & 11 & 13 \\
\hline
\Je & 28 & 45 & 55 & 55 & 91 & 105 & 21 & 28
\end{array}
$}
\caption{Smallest known \Gro. bases.  Each sample size at least 20.}
\label{table:allspor}
\end{table}

\subsection{Conclusions}
\Gro. bases for the length-lexicographical ordering~$\LL$ are consistently
much larger than for the reverse length-lexicographical ordering~$\RLL$.
In Table~\ref{table:allspor}, the size of the smallest $\LL$ \Gro. basis
lies between 14\% and 33\% of the group order.  This means that
the $\LL$ ordering is unsuitable for $p$-group algebras.

By contrast, $\RLL$ consistently yields very small \Gro. bases:
the size behaving very roughly as the logarithm of the group order.  Moreover,
$\RLL$~allows one to read off the radical layers in the group algebra.  Hence
the $\RLL$ ordering is well-suited for computing
with $p$-group algebras.

The Jennings ordering also seems to be suitable for computing with $p$-group
algebras.  Finding Jennings generators is easy, and it would appear that all
Jennings generating sets are equally useful.  Many properties of a \Gro.
basis are invariants of the group's order, including the size.  \Gro. bases
are not as small as for~$\RLL$, but this is compensated for by the fact that
fewer multiplication operations are necessary to obtain the action of
the average minimal tip.

The smallest exponent generator selection method seems to deliver a
small $\LL$ \Gro. basis more often than the arbitrary method does:
see Table~\ref{table:LLsmall}.  However, there are numerous exceptions
amongst the groups of order~$64$.
For the $\RLL$ ordering, all evidence suggests that the smallest exponent
method should be avoided: it is no more reliable at yielding a small \Gro.
basis than the arbitrary method, and sometimes misses the
smallest \Gro. bases.

\paragraph{Software availability}
The package \pres. is available at
{\selectlanguage{german}%
http://www.""math.""uni-wuppertal.""de/\~{}green/""software.""html},
as is a copy of the C \MA..  The most recent version of the C \MA.
is available at
{\selectlanguage{german}%
http://www-gap.""dcs.""st-and.""ac.""uk/\~{}gap}
as a GAP 3 share
package.  The \Mag. home page is at
{\selectlanguage{german}%
http://www.""maths.""usyd.""edu.""au:8000/""u/""magma/}.

\end{document}